\newcommand{\correctspacing}{\singlespacing}
\theoremstyle{plain}
\newtheorem{theorem}{Theorem}[section]
\newtheorem{pro}[theorem]{Proposition}
\newtheorem{lemma}[theorem]{Lemma}
\newtheorem{que}[theorem]{Question}
\theoremstyle{definition}
\newtheorem{definition}[theorem]{Definition}
\theoremstyle{definition}
\newcommand{\M}{\mathcal M} 
\renewcommand{\phi}{\varphi}
\newcommand{\Fixme}[1]{\marginpar{}}
\newcommand{\gray}[1]{\marginpar{}} 
 \providecommand{\cM}{\mathcal{M}}
  \providecommand{\RR}{\mathbb{R}}
\providecommand{\cM}{\mathcal{M}} 
 \providecommand{\cM}{\mathcal{M}}
\newcommand{\be}{\begin{equation}} \newcommand{\beo}{\begin{equation*}}
\newcommand{\ee}{\end{equation}}    \newcommand{\eeo}{\end{equation*}}
\newcommand{\bea}{\begin{eqnarray}}
\newcommand{\eea}{\end{eqnarray}}
\newcommand{\beao}{\begin{eqnarray*}} \newcommand{\eeao}{\end{eqnarray*}}
\author{Claus Griessler}
\thanks{Financial support through FWF-projects  P21209, P26736, and Y782.}
\date{\today}
\title[Finitely minimal martingale measures]{An extended footnote on finitely minimal martingale measures}
\begin{document}

\maketitle

\begin{abstract}
This note contains a short discussion on the sufficiency of finite optimality in martingale transport. It is shown that finitely minimal martingale measures are solutions of the martingale transport problem when the cost function is upper semi-continuous and bounded from above by  a sum of integrable functions.  As an application a transparent proof of the uniqueness of left-monotone martingale transport plans is given. \\
\smallskip
\noindent \textbf{Keywords:} martingales, optimal transport, monotonicity principle.
\end{abstract}

\section{Introduction and results}
Given probability measures $\mu$ and $\nu$ on $\RR$, a transport or transport plan $\pi$ is a probability measure on $\RR \times \RR$ that has $\mu$ and $\nu$ as its marginal measures. A martingale transport is any transport $\pi$ that has the martingale property. This means that for each continuous and bounded function $\rho: \RR \rightarrow \RR$ 
\beo
\int \rho (x) (y-x) \, d\pi (x,y) = 0.
\eeo
It is equivalent to require that for the disintegration $( \pi_xÂ )_x$ of $\pi$ one has 
\beo
\int y \, d\pi_x(y) = x \text{ for } \mu \text{-a.e. } x.
\eeo
The set of transport plans is denoted by $\cM (\mu, \nu)$, the set of martingale transport plans  by $\M_{mart}(\mu, \nu)$. \\

For a cost function $c: \RR \times \RR \rightarrow [0, \infty)$, the integral $\int c \, d \pi$ is the  cost of the transport $\pi$. The Monge-Kantorovich problem with a martingale constraint that is considered in this note is to find 

\be{\label{MK}}{\tag{martMK}}
\inf_{\pi \in \M_{mart}(\mu, \nu)} \int c \, d\pi.
\ee
The set $\cM_{\text{mart}}(\mu, \nu)$ is non-empty iff $\mu$ and $\nu$ are in convex order, i.e. $\mu \leq_{cx} \nu$. This relation of the marginals is hence always assumed.\\

Transport problems under martingale constraints and their connections to model-independent finance have been first studied in \cite{HoNe12, BeHePe13, GaHeTo14}. Meanwhile, a rich literature on the subject exists.  Without claiming completeness,  we mention  \cite{AcBePeSc16, BeCoHu14, BeJu14, BeNu14, BoNu15, CaLaMa14, ChKuTa15, DoSo14, GaHeTo14, HeTaTo14, HeTo13, HoKl15, HoNe12, Ju14, St14, TaTo13, Za15}.

As a particular contribution, \cite{BeJu14} started the investigation of finitely minimal martingale measures and the connection of finite minimality to optimality. It is in this context that this note should find its place. 
We repeat the relevant definitions:

\begin{definition}
Let $\alpha$ be a measure on $\RR \times \RR$. A measure $\alpha'$ is called a competitor of $\alpha$ if it has the same marginals as $\alpha$ and 
\beo
\int y \, d\alpha_x (y) = \int y \, d\alpha'_x(y)
\eeo
holds for $p_1(\alpha)$-almost every $x$.
\end{definition}

\begin{definition}
A set $\Gamma$ is called $c$-finitely  optimal if the following holds: whenever $\alpha$ is  a finite measure supported on finitely many points in $\Gamma$, it is cost-minimizing amongst its competitors. I.e., if $\alpha'$ is a competitor of $\alpha$, then
\beo
\int c \, d\alpha \leq \int c \, \alpha'.
\eeo
A martingale transport plan $\pi$  is $c$-finitely optimal if there is a $c$-finitely optimal set $\Gamma$ with $\pi (\Gamma)=1$.
\end{definition}
The idea behind this definition is to bring the concept of $c$-cyclical monotonicity to the martingale problem, and translate relevant results to the martingale case. Cyclical monotonicity of a transport plan in the usual Monge-Kantorovich problem is known to be equivalent to optimality under moderate conditions on the cost functions.  For an overview and the details, readers are referred to \cite{Vil09, AmPr03, ScTe08, BeGoMaSc08, Be15}. 
It is already shown in \cite{BeJu14} that optimal martingale measures are $c$-finitely optimal. The reverse implication  in \cite[Lemma 8.2]{BeJu14} is less satisfactory: it assumes continuity and boundedness of the cost function. This note establishes a sufficiency result in more generality, covering also many types of cost functions not included before. For results of this type the name \emph{monotonicity principle} has been suggested. See for instance Corollary 7.8 in \cite{BeNuTo16}, where a result of similar  spirit is proved.\footnote{The result in \cite{BeNuTo16} shows that if for a given cost function there are marginals $\mu$ and $\nu$ such that the dual problem admits a finite solution, then there is a (finitely minimal) set $\Lambda$ such that any martingale measure $\gamma$ (with arbitrary marginals) concentrated on $\Lambda$ is optimal for the martingale transport problem associated to the marginals of $\gamma$.}
\begin{theorem}
Let $\mu \leq_{cx} \nu$ be probability measures on $\RR$. Let $c: \RR \times \RR \rightarrow [0, \infty)$ be bounded by a sum of integrable functions, i.e. let there be $c_1 \in L_1(\mu)$ and $c_2 \in L_1 (\nu)$ such that 
\beo
c (x,y) \leq c_1 (x) + c_2 (y) 
\eeo
holds for all  $(x,y) \in A \times B$ where $\mu (A) = \nu (B) = 1 $. 
Then any $c$-finitely optimal $\pi \in \M_{mart}(\mu, \nu)$ is optimal. 
\end{theorem}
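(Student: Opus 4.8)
The plan is to prove optimality by exhibiting a dual certificate manufactured directly from the finitely optimal set $\Gamma$, and then invoking weak duality. Concretely, it suffices to produce measurable functions $\phi,\psi$ and a measurable multiplier $h$ such that
\[
\phi(x)+\psi(y)+h(x)(y-x)\le c(x,y)\qquad\text{for all }(x,y),
\]
with equality for $(x,y)\in\Gamma$, together with enough integrability to make the terms below finite. Granting this, for any competing $\pi'\in\M_{mart}(\mu,\nu)$ one has $\int c\,d\pi'\ge\int\phi\,d\mu+\int\psi\,d\nu+\int h(x)(y-x)\,d\pi'$, where the final integral vanishes by the martingale property of $\pi'$; while equality on $\Gamma$ together with $\pi(\Gamma)=1$ gives $\int c\,d\pi=\int\phi\,d\mu+\int\psi\,d\nu$. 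Comparing the two yields $\int c\,d\pi\le\int c\,d\pi'$, which is exactly optimality. The hypothesis $c\le c_1+c_2$ is what guarantees that $\int c\,d\pi'$ is finite for every martingale transport and is dominated by the marginal-determined constant $\int c_1\,d\mu+\int c_2\,d\nu$, so that none of these manipulations is vacuous.

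First I would construct the potentials in the style of Rockafellar--R\"uschendorf, with $c$-finite optimality playing the role that $c$-cyclical monotonicity plays in the classical theory. Fixing a base point $(x_\ast,y_\ast)\in\Gamma$, I define $\psi$ and its companions $\phi,h$ through an infimum, taken over finite admissible chains inside $\Gamma$, of the accumulated cost differences generated by elementary competitor moves; an admissible chain is a finite family of points of $\Gamma$ together with a barycentre-preserving and marginal-preserving redistribution of their second coordinates, which is precisely the data encoded in the notion of a competitor. The point of $c$-finite optimality is that it forbids improving finite competitors, hence forbids chains along which the accumulated cost decreases without bound; this is exactly what makes the defining infimum finite rather than $-\infty$, so that $\phi,\psi,h$ are well defined. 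Adjoining one further move to a near-optimal chain produces the global inequality $\phi(x)+\psi(y)+h(x)(y-x)\le c(x,y)$, while the tightness of chains running internally to $\Gamma$ produces equality there.

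The step I expect to be the crux is promoting these pointwise relations to the integrable dual certificate demanded in the first paragraph. From $\phi(x)+\psi(y)+h(x)(y-x)\le c(x,y)\le c_1(x)+c_2(y)$ one reads off, after using the martingale relation to neutralise the $h$-term fibre by fibre, integrable upper bounds for $\phi$ and $\psi$; the nonnegativity $c\ge 0$ and the equality on $\Gamma$ then supply matching lower bounds $\pi$-almost everywhere, so that $\phi\in L_1(\mu)$ and $\psi\in L_1(\nu)$. The genuinely delicate points are (i) the measurability of $\phi,\psi$ and especially of the martingale multiplier $h$, which arise as infima over chains and must be chosen measurably --- here the upper semicontinuity of $c$ is what keeps these infima measurable and preserves the inequalities under the weak limits implicit in the chain construction --- and (ii) the integrability of $h(x)(y-x)$ against \emph{every} martingale transport, without which the vanishing of the martingale term in weak duality is not licit. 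Controlling this last term is where the assumption $c\le c_1+c_2$ does its real work, and it is the estimate I would expect to absorb most of the technical effort.
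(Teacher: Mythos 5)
Your overall strategy (extract a pointwise dual certificate $\varphi(x)+\psi(y)+h(x)(y-x)\le c(x,y)$ with equality on $\Gamma$, then conclude by weak duality with the $h$-term killed by the martingale property) is the same as the paper's; the paper obtains the certificate by citing Proposition 8.10 of \cite{BeJu14} rather than redoing a Rockafellar-type chain construction. One remark on that first stage: a chain construction anchored at a single base point only works after the problem has been decomposed into irreducible components (Theorem 8.4 of \cite{BeJu14}), because the martingale constraint prevents competitor chains from connecting distinct components; you never mention this reduction, and without it the defining infimum need not be finite on all of $\Gamma$.

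The genuine gap is in your treatment of the step you yourself identify as the crux. You claim that the bound $\varphi(x)+\psi(y)+h(x)(y-x)\le c_1(x)+c_2(y)$, the positivity of $c$, and the equality on $\Gamma$ yield $\varphi\in L_1(\mu)$ and $\psi\in L_1(\nu)$. They do not: neutralising the $h$-term fibre by fibre only controls the combination $x\mapsto \varphi(x)+\int\psi(y)\,d\pi_x(y)$, never $\varphi$ and $\psi$ separately, and separate integrability of the dual components is known to fail in martingale transport --- this is precisely why Lemma 8.2 of \cite{BeJu14} had to assume $c$ bounded and continuous, and it is the obstruction this paper is written to remove. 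Once $\varphi\oplus\psi$ is not a difference of integrable marginal functions, there is no a priori reason why $\int\int(\varphi(x)+\psi(y))\,d\pi'_x(y)\,d\mu(x)$ should take the \emph{same value} for a competing martingale transport $\pi'$, and that identity is exactly what your weak-duality comparison needs. The paper's actual content is the workaround: Lemma 2.4 produces a convex function $\chi$ (essentially the convex envelope of $c_2-\psi$) with $\varphi-\chi\le c_1$ $\mu$-a.e.\ and $\psi+\chi\le c_2$ $\nu$-a.e., so that $\int(\varphi-\chi)\,d\mu$ and $\int(\psi+\chi)\,d\nu$ are well defined in $[-\infty,\infty)$; Proposition 2.1 shows that the correction $J(\chi)=\int\int\chi(y)\,d\pi_x(y)-\chi(x)\,d\mu(x)$ is the same for every martingale transport; and Lemma 2.3 shows the resulting transport-independent quantity $I(\varphi+\psi)$ equals $\int\xi\,d\pi'$ for every $\pi'$ for which $\xi$ has a well-defined integral. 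Without some substitute for this convex-shift argument your proof does not close.
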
  
A particular  contribution of \cite{BeJu14} was the introduction and investigation of \emph{left-monotone} martingale transports: 
\begin{definition}
A set $\Gamma \subseteq \RR \times \RR$ is called left-monotone if for all\\
 $(x,y^-), (x,y^+), (x',y') \in \Gamma$ with $x<x'$ and $y^- < y^+$ it follows that 
\beo
y' \leq y^- ~~ \text{Â or } ~ y' \geq y^+.
\eeo
A martingale transport $\pi$ is called left-monotone if there is a left-monotone $\Gamma$ with $\pi(\Gamma) = 1$. 
\end{definition}
\cite{BeJu14} shows existence, uniqueness and some remarkable optimality properties of a left-monotone martingale transport $\pi_{lc} \in \M_{\text{mart}}(\mu, \nu)$.  However, the proof of uniqueness seems somewhat opaque as it relies on substantial preparations and a delicate approximation procedure.  Thanks to the wider reach of Theorem 1.3, one can now almost immediately see from the construction of $\pi_{lc}$ that it is the only left-monotone martingale transport. If one accepts Theorem 1.3 as an intuitive fact, this may be seen as a helpful abbreviation. 
\begin{theorem}
The martingale transport $\pi_{lc}$ is the unique left-monotone element in $\M_{\text{mart}}(\mu, \nu)$.
\end{theorem}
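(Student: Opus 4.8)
The plan is to deduce uniqueness from an \emph{optimality} statement, using Theorem 1.3 as the bridge. I would fix once and for all a cost with the martingale Spence--Mirrlees property that is admissible for Theorem 1.3, for instance
\[
c(x,y) = (1 - \tanh x)\sqrt{1+y^2}.
\]
This $c$ is nonnegative, jointly continuous, strictly convex in $y$ ($\partial_{yy} c > 0$), and satisfies the strict martingale Spence--Mirrlees inequality $\partial_{xyy} c = -\operatorname{sech}^2 x\,(1+y^2)^{-3/2} < 0$ everywhere. Crucially it is \emph{unbounded} in $y$, so the bounded-continuous sufficiency result of \cite{BeJu14} does not apply; but $c(x,y) \le 2(1+|y|) =: c_2(y)$, and $\mu \leq_{cx} \nu$ forces $\int |y|\, d\nu < \infty$, so $c$ is dominated by a sum of integrable functions and Theorem 1.3 is available. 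This is exactly the point where the extra generality of Theorem 1.3 is needed.

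The heart of the matter is the elementary three-point computation linking $c$ to left-monotonicity. Given $x < x'$ and $y^- < y' < y^+$ with $y' = \lambda y^- + (1-\lambda)y^+$, the swap that moves the mass $m$ at $(x',y')$ to $(x,y')$ while carrying masses $\lambda m,(1-\lambda)m$ from $(x,y^-),(x,y^+)$ over to $(x',y^-),(x',y^+)$ keeps both marginals and both column barycenters fixed, so it produces a competitor; its cost change equals $m\,[F(x') - F(x)]$, where $F(u) := \lambda c(u,y^-) + (1-\lambda)c(u,y^+) - c(u,y')$ is the convexity gap of $c(u,\cdot)$ at the triple. Strict convexity in $y$ gives $F>0$, while $\partial_{xyy}c<0$ makes $F$ strictly decreasing, so this swap strictly lowers the cost. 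Hence a finite measure whose support contains such a forbidden triple is never cost-minimizing among its competitors, whereas a finite measure supported on a left-monotone set is (this last, purely finite-dimensional, optimality is the discrete Spence--Mirrlees statement and follows from the same inequality). Thus a set is $c$-finitely optimal precisely when it is left-monotone. Feeding this into Theorem 1.3, every left-monotone $\pi \in \M_{\mathrm{mart}}(\mu,\nu)$ is optimal for $c$; since $\pi_{lc}$ is left-monotone, all left-monotone transports share the common optimal value $\int c \, d\pi_{lc}$.

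For uniqueness I would run an averaging argument. Let $\pi$ be any left-monotone transport and put $\bar\pi = \tfrac12(\pi + \pi_{lc})$. By the previous paragraph $\bar\pi$ is again $c$-optimal, hence $c$-finitely optimal by the easy implication of \cite{BeJu14}, hence concentrated on a $c$-finitely optimal, i.e.\ left-monotone, set $\bar\Gamma$. Since $\bar\pi = \tfrac12(\pi+\pi_{lc})$, both $\pi$ and $\pi_{lc}$ are concentrated on the \emph{single} left-monotone set $\bar\Gamma$. It then remains to see that a left-monotone set carries at most one element of $\M_{\mathrm{mart}}(\mu,\nu)$, which identifies $\pi$ with $\pi_{lc}$. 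This last step is the main obstacle, and it is here that the construction of $\pi_{lc}$ enters: the left-curtain construction is a left-to-right greedy recipe reconstructing the coupling from the order structure of its support, so any martingale coupling supported on $\bar\Gamma$ with marginals $\mu,\nu$ must agree with the one the recipe produces. Making this determinacy precise---that the conditional $\pi_x$ is forced $\mu$-a.e.\ by left-monotonicity together with the marginal and barycenter constraints---is the one place where a genuine, if short, argument beyond a formal appeal to Theorem 1.3 is required; note that strictness of $c$ alone does not suffice, since the segment $t\mapsto (1-t)\pi_{lc}+t\pi$ stays $c$-optimal and on $\bar\Gamma$, so the rigidity must come from the structure of $\bar\Gamma$ rather than from the cost.
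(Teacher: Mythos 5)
Your reduction to optimality for a single Spence--Mirrlees cost does not close the argument; the proof has an essential gap at exactly the point you flag. After the averaging step you know that $\pi$ and $\pi_{lc}$ are both concentrated on one left-monotone set $\bar\Gamma$, and you then need the rigidity statement that a left-monotone set carries at most one element of $\M_{\text{mart}}(\mu,\nu)$. But that rigidity is essentially the theorem itself: it is precisely the delicate part of the Beiglb\"ock--Juillet uniqueness proof (the ``greedy reconstruction'' you invoke is their approximation procedure) that this note is written to circumvent. Your own observation that the whole segment $t\mapsto(1-t)\pi_{lc}+t\pi$ remains $c$-optimal and supported on $\bar\Gamma$ shows correctly that no single cost can separate the couplings, so with one cost the rigidity lemma is unavoidable --- and you do not prove it. The paper sidesteps this entirely by using the two-parameter family $c_{s,t}(x,y)=\mathbbm{1}_{(-\infty,s]}(x)\,|y-t|$: by the shadow construction, $\pi_{lc}$ is the \emph{unique} simultaneous minimizer of all the $c_{s,t}$ (the functions $y\mapsto|y-t|$ determine the convex order, and the shadow is the convex-order-minimal admissible image of $\mu|_{(-\infty,s]}$), so once Lemma 3.1 and Theorem 1.3 show that every left-monotone coupling minimizes every $c_{s,t}$, uniqueness is immediate with no support-rigidity needed.

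There is a secondary gap: the claim that a finite measure supported on a left-monotone set is cost-minimizing among its competitors for your $c$ does not ``follow from the same inequality''. The three-point swap shows that configurations containing a forbidden triple can be strictly improved; it does not show that left-monotone configurations cannot be improved by competitors that are not reachable by a single such swap. The converse is true, but it needs the structural fact established in the paper's Lemma 3.1, namely that for left-monotone $\alpha$ and any competitor $\alpha'$ the partial column sums satisfy $\alpha_{x_1}+\dots+\alpha_{x_i}\leq_{cx}\alpha'_{x_1}+\dots+\alpha'_{x_i}$ for every $i$; a summation by parts in $x$ then converts this into $\int c\,d\alpha\le\int c\,d\alpha'$ for any cost with $\partial_{xyy}c\le 0$, since the increments $c(x_{j+1},\cdot)-c(x_j,\cdot)$ are concave. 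Your choice of cost and the verification that Theorem 1.3 applies to it are fine; the obstruction is not integrability but these two missing lemmas, the first of which is the genuinely hard one.
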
 

\section{Proof of Theorem 1.3}

A successful strategy in deriving optimality from c-cyclical monotonicity in the usual Monge-Kantorovich problem was to show a splitting-property of c-cyclical monotone sets, see \cite{ScTe08}: if $\Gamma$ is $c$-cyclically monotone then there are measurable functions $\varphi$ and $\psi$ such that
\beo
\varphi (x) + \psi (y) \leq c (x,y)
\eeo 
holds everywhere and with equality on $\Gamma$. Although one does not need to know whether the functions $\varphi$ and $\psi$ are integrable (against $\mu$ and $\nu$, respectively), one can still show that the integral of $\varphi \oplus \psi$ is invariant amongst the measures from $\cM (\mu, \nu)$ for which this integral is defined. Optimality then follows easily.\\ 
As $c$-finite optimality is less than $c$-cyclical monotonicity this attempt cannot be copied verbatim, but it is possibly to show a similar splitting property, namely that, after a suitable reduction of the problem,  there are functions $\varphi,  \psi$, and $\Delta$ such that 
\beo
\varphi (x) + \psi (y) + \Delta (x)\, (y-x) \leq c(x,y).
\eeo
holds everywhere and with equality on the $c$-finitely minimal set given.
The strategy in this note is  to examine the integrability problems that come with this splitting slightly more thoroughly and get the stronger result. This is done in the next definitions and lemmas. We  assume that both $\mu$ and $\nu$ are concentrated on the interval $I$ for which the statements are formulated. 
First we recall a simple result from \cite{BeJu14}:
\begin{pro}
Let $\chi: I \rightarrow \RR$ be convex. Then for any $\pi \in \cM_{\text{mart}}(\mu,\nu)$, the value 
\beo
J(\chi) = \int \int \chi(y)\, d\pi_x(y) - \chi(x) \, d\mu(x)
\eeo
is well-defined in $[0,\infty]$ and does not depend on the choice of $\pi$.
\end{pro}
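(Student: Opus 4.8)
The plan is to reduce to a nonnegative convex $\chi$ and then isolate the single genuine obstruction, namely that $\int \chi\, d\mu$ and $\int \chi\, d\nu$ may both be infinite. First I would note that subtracting an affine function $\ell$ from $\chi$ leaves $J$ unchanged: by the martingale property $\int \ell(y)\, d\pi_x(y) = \ell(x)$ for $\mu$-a.e.\ $x$, so the inner integrand changes by $\int \ell\, d\pi_x - \ell(x) = 0$. Taking $\ell$ to be a supporting line of $\chi$ at an interior point of $I$, I may replace $\chi$ by $f := \chi - \ell \geq 0$ and so assume $\chi \geq 0$ throughout. With $\chi \geq 0$, Jensen's inequality for the probability measure $\pi_x$ together with the barycenter identity $\int y\, d\pi_x(y) = x$ gives $\int \chi(y)\, d\pi_x(y) \geq \chi(x)$ for $\mu$-a.e.\ $x$; the inner integrand is therefore nonnegative and $J(\chi) \in [0,\infty]$ is well-defined irrespective of finiteness. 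This settles the first assertion.

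For independence of $\pi$, the tempting identity $J(\chi) = \int \chi\, d\nu - \int \chi\, d\mu$ is legitimate only when $\int \chi\, d\mu < \infty$, in which case both terms depend on the marginals alone and there is nothing more to prove. The hard part is the case $\int \chi\, d\mu = \int \chi\, d\nu = \infty$, where this subtraction is meaningless. To handle it I would approximate $\chi$ from below by the convex functions $\chi_n$ obtained by keeping $\chi$ on $[-n,n]$ and extending it affinely with the one-sided slopes at the endpoints $\pm n$. Each $\chi_n$ has at most linear growth, hence is integrable against $\mu$, $\nu$ and every $\pi$ (finite first moments being forced by $\mu \leq_{cx} \nu$), so
\[
J(\chi_n) = \int \chi_n\, d\nu - \int \chi_n\, d\mu
\]
is manifestly $\pi$-independent for every $n$.

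It then remains to pass to the limit, and the key device is to rewrite the inner integrand by invoking the martingale property once more: since $\int \chi_n'(x)(y-x)\, d\pi_x(y) = 0$,
\[
\int \chi_n(y)\, d\pi_x(y) - \chi_n(x) = \int \bigl[\chi_n(y) - \chi_n(x) - \chi_n'(x)(y-x)\bigr]\, d\pi_x(y),
\]
where the bracketed ``convexity defect'' $g_n(x,y)$ is nonnegative. For fixed $x$ and all $n > |x|$ the function $\chi_n$ coincides with $\chi$ near $x$, so $\chi_n(x) = \chi(x)$ and $\chi_n'(x) = \chi'(x)$ (choosing, say, right derivatives consistently), while $\chi_n(y) \nearrow \chi(y)$ for each $y$; hence $g_n(x,y)$ increases monotonically to the defect $g(x,y)$ of $\chi$ itself. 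Two applications of monotone convergence — first in $y$ against $\pi_x$, then in $x$ against $\mu$ — yield $J(\chi_n) \nearrow J(\chi)$, and since each $J(\chi_n)$ is independent of $\pi$, so is the limit.

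The main obstacle is precisely this last passage. One cannot use the naive truncation $\min(\chi,n)$, which destroys convexity and with it the cancellation $\int \chi_n'(x)(y-x)\, d\pi_x = 0$. The affine-extension truncation is chosen exactly so that the defects $g_n$ remain nonnegative and increase to $g$, which is what makes monotone convergence applicable despite the indeterminate behaviour of $\int \chi_n\, d\nu$ and $\int \chi_n\, d\mu$ taken separately.
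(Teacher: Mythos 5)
The paper does not actually prove this proposition: it is quoted verbatim from [BeJu14] with the phrase ``we recall a simple result,'' so there is no in-text argument to compare yours against. Taken on its own, your proof follows the standard route (affine reduction, Jensen, truncation to linearly growing convex minorants) and reaches the right conclusion, but one step is stated more strongly than you have justified it. You establish that the convexity defects $g_n(x,\cdot)$ increase to $g(x,\cdot)$ only for $n > |x|$, i.e.\ for $n$ beyond a threshold depending on $x$; monotone convergence in the outer variable $x$ requires monotonicity of $h_n(x) = \int g_n(x,y)\,d\pi_x(y)$ in $n$ uniformly in $x$ (or some substitute), and ``eventually increasing with the threshold depending on $x$'' is not by itself enough. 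The gap is easily closed: the difference $\chi_{n+1}-\chi_n$ is itself a nonnegative convex function (zero on $[-n,n]$, with nondecreasing derivative outside), so Jensen applied to it gives $h_{n+1}(x) \geq h_n(x)$ for \emph{all} $n$ and $\mu$-a.e.\ $x$; alternatively $\chi - \chi_n$ is convex, whence $h_n \leq h$ everywhere and Fatou plus this domination yields $\lim_n \int h_n\,d\mu = \int h\,d\mu$. You should also replace the truncation windows $[-n,n]$ by an exhaustion of $I$ by compact subintervals, since $\chi$ is only defined (and may blow up) on $I$; this changes nothing in the argument. With these two repairs the proof is complete and, as far as I can tell, is essentially the argument the cited reference has in mind.
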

  \begin{definition}
Let $\varphi, \psi: I \rightarrow \RR$  be measurable functions and $\chi: I \rightarrow \RR$ a convex function such that  both $\int \varphi - \chi \, d \mu < \infty$ and $\int \psi+ \chi \, d \nu < \infty$. Then set
 \beo
 I(\varphi+ \psi) = \int \varphi - \chi \, d \mu + \int \nu + \chi \, d\nu - J(\chi).
 \eeo
 \end{definition}
Note that this definition is, as the notation suggests, independent of the chosen function $\chi$. Indeed, if 
$\chi$ and $\chi'$ are two functions fulfilling the conditions, then we can assume $I(\varphi + \psi, \chi)$ to be finite. Therefore $I(\varphi + \psi, \chi) - I(\varphi + \psi, \chi')$ is defined in $(-\infty, \infty]$. Computing the outcome yields 0, making use of the conditions on $\chi$ and $\chi'$.

\begin{lemma}
Let  $\varphi, \psi, \Delta: I \rightarrow \RR$ be measurable functions and $\pi \in \cM_{\text{mart}}(\mu, \nu)$. If $\varphi$ and $\psi$ are as in definition 2.2 and the function
\beo
\xi(x,y) = \varphi (x) + \psi (y) + \Delta (x) \, (y-x)
\eeo
has a well-defined integral against $\pi$ in $[-\infty, \infty)$, then
\beo
I(\varphi + \psi) = \int \int \varphi (x) + \psi (y) \, d\pi_x(y)\, d\mu(x) = \int \xi \,d\pi.
\eeo
\end{lemma}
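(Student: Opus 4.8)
The plan is to prove the two claimed equalities separately. Throughout I would abbreviate $\tilde\varphi = \varphi - \chi$ and $\tilde\psi = \psi + \chi$, where $\chi$ is the convex function furnished by Definition 2.2; by hypothesis $\int \tilde\varphi \, d\mu$ and $\int \tilde\psi \, d\nu$ are then well-defined in $[-\infty, \infty)$, i.e.\ both have integrable positive part. The recurring principle is that nothing is assumed absolutely integrable, so every Fubini/disintegration passage and every splitting of an integral must be justified by controlling positive parts only.

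For the left equality $I(\varphi + \psi) = \int\int (\varphi(x) + \psi(y)) \, d\pi_x(y) \, d\mu(x)$, I would start from the pointwise decomposition
\[
\varphi(x) + \psi(y) = \tilde\varphi(x) + \tilde\psi(y) - \bigl(\chi(y) - \chi(x)\bigr).
\]
Each of the three summands $\tilde\varphi(x)$, $\tilde\psi(y)$, and $-(\chi(y)-\chi(x))$ has $\pi$-integrable positive part: for the first two this is exactly the hypothesis of Definition 2.2 (via the marginals $\mu = p_1(\pi)$, $\nu = p_2(\pi)$), and for the third it follows because $J(\chi) = \int(\chi(y)-\chi(x))\,d\pi$ is well-defined in $[0,\infty]$ by Proposition 2.1, so $(\chi(y)-\chi(x))^-$ is integrable. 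Hence $\varphi(x)+\psi(y)$ has integrable positive part, the iterated integral equals $\int(\varphi(x)+\psi(y))\,d\pi$ by disintegration, and this splits additively in $[-\infty,\infty)$ into $\int \tilde\varphi \, d\mu + \int \tilde\psi \, d\nu - J(\chi) = I(\varphi+\psi)$.

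For the right equality $\int\int (\varphi(x) + \psi(y)) \, d\pi_x(y) \, d\mu(x) = \int \xi \, d\pi$, I would descend to the disintegration. Since $\int \xi \, d\pi$ lies in $[-\infty, \infty)$ we have $\int \xi^+ \, d\pi < \infty$, so Tonelli applied to $\xi^+$ gives, for $\mu$-a.e.\ $x$, that $y \mapsto \xi(x,y)$ has $\pi_x$-integral well-defined in $[-\infty, \infty)$ and that $\int \xi \, d\pi = \int\bigl(\int \xi(x,y) \, d\pi_x(y)\bigr) d\mu(x)$. Fixing such an $x$, the martingale property $\int y \, d\pi_x(y) = x$ makes $y$ $\pi_x$-integrable, so the affine function $y \mapsto \varphi(x) + \Delta(x)(y-x)$ is $\pi_x$-integrable with integral $\varphi(x)$. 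Writing $\psi(y) = \xi(x,y) - \bigl(\varphi(x) + \Delta(x)(y-x)\bigr)$ and subtracting this finite-integral function then shows $\int \psi \, d\pi_x$ is well-defined in $[-\infty, \infty)$ and that $\int \xi(x,y)\,d\pi_x(y) = \varphi(x) + \int \psi\,d\pi_x = \int(\varphi(x)+\psi(y))\,d\pi_x(y)$; integrating in $x$ gives the equality.

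The main obstacle is precisely this bookkeeping around non-integrability, and the tempting shortcut must be resisted: one cannot assert $\int \Delta(x)(y-x)\,d\pi = 0$ by a global Fubini argument, because this term need not be $\pi$-integrable. The remedy is never to isolate it globally but to absorb it inside the fibrewise identity, where the martingale property of the single kernel $\pi_x$ renders $y\mapsto\Delta(x)(y-x)$ genuinely integrable with vanishing integral. Symmetrically, in the first equality one must avoid writing $\int \chi\,d\mu$ or $\int \chi\,d\nu$ in isolation, since these may be infinite; only the combined quantity $J(\chi)$ together with the shifted functions $\tilde\varphi,\tilde\psi$ are controlled, which is why the decomposition is organized around them.
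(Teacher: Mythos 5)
Your treatment of the second equality is correct and is exactly the paper's mechanism (the martingale property is applied fibrewise to $y\mapsto \Delta(x)(y-x)$, never globally). The gap is in the first equality, at the step ``$J(\chi)=\int(\chi(y)-\chi(x))\,d\pi$ is well-defined in $[0,\infty]$ by Proposition 2.1, so $(\chi(y)-\chi(x))^-$ is integrable.'' Proposition 2.1 only asserts that the \emph{iterated} integral $\int\bigl(\int\chi(y)\,d\pi_x(y)-\chi(x)\bigr)d\mu(x)$ is well-defined, because the inner quantity is nonnegative by Jensen. That is a statement about the fibrewise integrals, not about the integrand on the product space: for each $x$ one has $\int(\chi(y)-\chi(x))^-\,d\pi_x(y)\le |\chi'(x)|\int|y-x|\,d\pi_x(y)<\infty$ via the affine minorant at $x$, but the $\mu$-integral of this bound can diverge even when $J(\chi)<\infty$ (schematically: with $\pi_x=\tfrac12\delta_{x-h(x)}+\tfrac12\delta_{x+h(x)}$, $J(\chi)$ is controlled by $\sum \mu(\{x\})\chi''(x)h(x)^2$ while $\int(\chi(y)-\chi(x))^-d\pi$ behaves like $\sum\mu(\{x\})\chi'(x)h(x)$, and the second series can diverge while the first converges). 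In that situation $(\chi(y)-\chi(x))^+$ and $(\chi(y)-\chi(x))^-$ may both fail to be $\pi$-integrable, the product integral $\int(\chi(y)-\chi(x))\,d\pi$ is undefined, and your three-term splitting of $\int(\varphi(x)+\psi(y))\,d\pi$ — as well as the claim that $(\varphi(x)+\psi(y))^+$ is $\pi$-integrable, which you derive from the same bound — collapses. You in fact flag the analogous trap for the $\Delta(x)(y-x)$ term and resolve it fibrewise, but then commit the global version of it for the $\chi$-term.

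The repair is to treat the $\chi$-terms the same way you treat the $\Delta$-term, which is what the paper does: write everything as a single integral in $d\mu(x)$ and cancel $\chi$ inside each fibre, i.e.\ use $\int(\psi(y)+\chi(y))\,d\pi_x(y)-\bigl(\int\chi(y')\,d\pi_x(y')-\chi(x)\bigr)+(\varphi(x)-\chi(x))=\varphi(x)+\int\psi(y)\,d\pi_x(y)$ for $\mu$-a.e.\ $x$ (where $\chi$ is $\pi_x$-quasi-integrable because it dominates an integrable affine function), and only then integrate in $x$; the interchange of sum and $\mu$-integral is legitimate because all three summands, as functions of $x$, have $\mu$-integrable positive parts — $(\varphi-\chi)^+$ and $x\mapsto\int(\psi+\chi)^+\,d\pi_x$ by Definition 2.2, and the $J(\chi)$-integrand is nonpositive after the sign flip. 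With that substitution your argument becomes the paper's proof.
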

\begin{proof}
\begin{align*}
I (\varphi + \psi) &= \int \varphi - \chi \, d \mu + \int \psi + \chi \, d\nu - J(\chi) \\
& = \int \varphi - \chi \, d \mu + \int \psi(y) + \chi(y) \, d\pi_x(y) \, d \mu(x)  - J(\chi) \\
& = \int \Big( \varphi(x) - \chi(x)  + \int \psi(y) + \chi(y)  \, d\pi_x(y)  - \int \chi (y') \, d\pi_x(y') + \chi(x) \Big)\, d\mu (x)\\
&= \int \Big( \varphi(x)  + \int \psi (y) \, d \pi_x(y) \Big) \, d \mu(x)\\
&= \int \int \varphi (x) + \psi (y) \, d\pi_x(y) \, d \mu(x)\\
&= \int \xi d \pi.
\end{align*}
In the last step the martingale property of $\pi$ was used.
\end{proof}

\begin{lemma}
Let  $\varphi, \psi: I \rightarrow \RR$ and $\Delta: I \rightarrow \RR$ be  functions, and let $c_1 \in L_1 (\mu)$ and $c_2 \in L_1(\nu)$  be such that the inequality
\beo
\varphi (x) + \psi (y) + \Delta (x) \; (y-x) \leq c_1(x) + c_2(y) 
\eeo 
holds on a set $A \times B$ with $\mu (A) = \nu (B) = 1$.
Then there is an interval $I' \subseteq I$ with $\mu(I') = \nu(I') = 1$, and a convex function $\chi: I' \rightarrow \RR$ such that $\varphi-\chi \leq c_1 ~\mu-a.e.$  and $\psi+ \chi \leq c_2 ~\nu-a.e.$
\end{lemma}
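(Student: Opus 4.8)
The plan is to build $\chi$ as the upper envelope of the affine functions that the hypothesis hands us. First I would discard null sets: shrinking $A$ I may assume $c_1<\infty$ on $A$ (so that $\varphi-c_1$ is real-valued there) and likewise $c_2<\infty$ on $B$. Rewriting the standing inequality as
\[
(\varphi-c_1)(x)+\Delta(x)\,(y-x)\ \le\ (c_2-\psi)(y),\qquad (x,y)\in A\times B,
\]
shows that for each fixed $x\in A$ the affine function $\ell_x(y):=(\varphi-c_1)(x)+\Delta(x)(y-x)$ lies below $c_2-\psi$ on $B$. I would then set
\[
\chi(y):=\sup_{x\in A}\ell_x(y),
\]
a supremum of affine functions, hence convex and lower semicontinuous on $\RR$. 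Since $\mu(A)=1$ the family is non-empty, so picking one $x_0\in A$ shows $\chi\ge\ell_{x_0}>-\infty$ everywhere. By construction $\chi(x)\ge\ell_x(x)=(\varphi-c_1)(x)$ for $x\in A$, that is $\varphi-\chi\le c_1$ on $A$; and $\chi(y)\le(c_2-\psi)(y)$ for $y\in B$, that is $\psi+\chi\le c_2$ on $B$. Thus both desired one-sided bounds already hold on sets of full measure, and the only remaining task is to produce an interval of full $\mu$- and $\nu$-measure on which $\chi$ is finite.

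Let $J:=\{\chi<\infty\}$; as $\chi$ is convex this is an interval, and $\chi\colon J\to\RR$ is real-valued and convex there. The $\nu$-side is immediate: for $\nu$-a.e.\ $y$ one has $y\in B$ and $c_2(y)<\infty$, whence $\chi(y)\le(c_2-\psi)(y)<\infty$; so $\nu(J)=1$, and in particular $\supp(\nu)\subseteq\bar J=:[p,q]$.

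The $\mu$-side is where the convex order must enter, and I expect it to be the main obstacle — note that I cannot argue by integration, since $\psi$ need not be $\nu$-integrable, so finiteness has to be read off the effective domain geometrically. From $\mu\le_{cx}\nu$ one gets $\supp(\mu)\subseteq[p,q]$ as well, so $\mu(J)=1$ can fail only through $\mu$-mass sitting at an endpoint of $J$ that does not belong to $J$. Suppose $p\notin J$ yet $\mu(\{p\})>0$. Fixing any $\pi\in\cM_{\text{mart}}(\mu,\nu)$, the conditional $\pi_p$ is a probability measure with barycenter $p$ supported in $\supp(\nu)\subseteq[p,q]$; since $p$ is the left endpoint this forces $\pi_p=\delta_p$, and hence $\nu(\{p\})\ge\mu(\{p\})>0$. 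But then $p$ carries $\nu$-mass, so (arranging $p\in B$) the bound $\chi(p)\le(c_2-\psi)(p)<\infty$ gives $p\in J$, a contradiction. Therefore $\mu(\{p\})=0$, and symmetrically $\mu(\{q\})=0$; together with $\supp(\mu)\subseteq[p,q]$ this yields $\mu(J)=1$.

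Taking $I':=J\cap I$ (still an interval, now of full $\mu$- and $\nu$-measure) and keeping $\chi$ as above finishes the argument: $\chi$ is convex and real-valued on $I'$, $\varphi-\chi\le c_1$ holds $\mu$-a.e., and $\psi+\chi\le c_2$ holds $\nu$-a.e. The routine points — lower semicontinuity (hence measurability) of $\chi$, and the reduction to the sets where $c_1,c_2$ are finite — I would dispatch in a line; the one genuinely structural step is the endpoint computation above, which is exactly the place where the martingale constraint and the convex-order hypothesis $\mu\le_{cx}\nu$ are indispensable.
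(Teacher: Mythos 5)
Your construction is essentially the paper's: where you take $\chi=\sup_{x\in A}\ell_x$, the paper takes the full convex envelope $(c_2-\psi)^{**}$ over $B$ (which dominates your $\chi$ but plays the identical role), and in both cases the two one-sided bounds follow by evaluating at $y=x$ on $A$ and by the defining inequality on $B$, with $I'$ being in effect $\mathrm{conv}(B)$. Your endpoint argument via $\pi_p=\delta_p$ correctly fills in the step the paper dispatches with the bare assertion that $\mu(\mathrm{conv}\,B)=1$ ``due to the convex order,'' so the proposal is correct and, if anything, slightly more complete.
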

\begin{proof}
On $I$ define the functions  
\beo
f  = \varphi - c_1
\eeo 
and
\beo 
g  = \psi  - c_2.
\eeo
Define $\chi$ to be the largest convex function on $I$ that is smaller than $-g$ on $B$, i.e.  
\beo
\chi (x) = (-g)^{**}(x) = \sup \bigl\{ \alpha (x): ~ \alpha ~\text{ convex } I \rightarrow \RR ~ \text{ with } \alpha \leq -g ~\text{ on } B \bigr\}.
\eeo
For any $x\in A$ the  function $y \mapsto f(x) + \Delta (x) (y-x)$ is convex and not larger than $-g$ on $B$, hence $\chi > -\infty$. Also, $\chi < \infty$ on $I' = \text{conv }B$. We have $\nu (I') =1$, and due to the convex order also $\mu (I') =1$.
 For all $x\in A$ and all $y \in \text{conv } B$, 
\beo
f(x) + \Delta (x) (y-x) \leq \chi (y).
\eeo
Hence, setting $y=x$, for all $x\in A\cap I'$, and thus $\mu$-a.e., $f(x) \leq \chi( x)$. 
Therefore, 
\beo
\varphi - \chi \leq c_1 ~ \mu-\text{a.e.}
\eeo
and 
\beo
\psi + \chi \leq c_2 ~\nu-\text{a.e.}
\eeo
\end{proof}\textbf{}

\begin{proof}[Proof of Theorem 1.3.] 
We first recall that it is enough to show the claim when the marginals are irreducible on an interval $I$: 
this  means that $\mu (I) = \nu( \bar{I}) =  1$, and that the potential function $u_{\mu}$ of $\mu$ is strictly smaller than the potential function $u_{\nu}$ of $\nu$ on $I$. 
That this does not lead to a loss of generality follows from Theorem 8.4 in \cite{BeJu14}: $\mu$ and $\nu$ can be decomposed into $\sum_{k\geq 0} \mu_k$, $\sum_{k\geq 0} \nu_k$ such that $\mu_0= \nu_0$ and $\mu_k$ and $\nu_k$ are irreducible on $I_k$, where the intervals $I_k$ are the components of $\{ u_{\mu} < u_{\nu} \}$ and $\mu_k = \mu|_{I_k}$. Furthermore, any martingale transport between $\mu$ and $\nu$ can then be decomposed into a sum of martingale transports between $\mu_k$ and $\nu_k$. \\
The irreducibility assumption can be used to show that a $c$-finitely optimal set $\Gamma$ can be w.l.o.g. assumed to  have additional regularity properties, and it can then be shown that there are measurable functions $\varphi, \psi: \RR \rightarrow [-\infty, \infty)$ and $\Delta: \RR \rightarrow \RR$ such that 
\beo
\varphi (x) + \psi (y) + \Delta (x) (y-x) \leq c(x,y)
\eeo
holds everywhere and with equality on $\Gamma$. This is Proposition 8.10 in \cite{BeJu14}.\footnote{The result in \cite{BeJu14} is only formulated for continuous cost functions, but it is not difficult to see that their proof also works for upper semi-continuous cost functions.}
\\
Assume now that $\pi$ is a $c$-finitely optimal transport plan, that is concentrated on a $c$-finitely optimal set $\Gamma$ such that there are functions $\varphi, \psi, \Delta$ as above. After suitably restricting and changing the functions $\varphi$ and $\psi$ we are in the situation of Lemma 2.4, find a convex function $\chi$ as in Definition 2.2, and can define $I(\varphi + \psi)$. 
We set 
\beo
\xi (x,y) = \varphi(x) + \psi (y) + \Delta (x) (y-x).
\eeo
Due to the assumption on $c$ being bounded by a sum of integrable functions, $\xi$ is integrable for all transport plans. If $\pi' \in \cM_{\text{mart}} (\mu, \nu)$, then due to Lemma 2.3
\beo
I(\varphi+ \psi) = \int \xi \, d\pi
\eeo
and
\beo
I(\varphi + \psi) = \int \xi \, d\pi'.
\eeo
We also have $\int \xi \, d \pi = \int c \, d\pi$ and $\int \xi \, d\pi'\leq \int c \, d\pi'$, therefore
\beo
\int c \, d\pi \leq \int c \, d\pi'.
\eeo
\end{proof}

\section{Proof of Theorem 1.5}

Before applying Theorem 1.3 to prove the claim, we recall how a particular (and as it turns out, \emph{the}) left-monotone coupling $\pi_{lc}$ is constructed. From this construction (or a characterization implied by it), Theorem 1.3 swiftly  leads to uniqueness. 

A measure $\mu$ is smaller in the extended convex order than a measure $\nu$ (notation: $\mu \leq_E \nu$) if for each nonnegative convex function $\varphi$
\beo
\int \varphi \, d\mu \leq \int \varphi \, d\nu.
\eeo
For such measures there is a measure $\theta$ which is, (1), smaller than $\nu$ (i.e. $\theta \leq \nu$), but, (2), still larger than $\mu$ in the convex order (i.e. $\mu \leq_{cx} \theta$), and, (3),  smallest in convex order among all measures having the first two properties. $\theta$ is then called the shadow of $\mu$ in $\nu$, notation: $S^{\nu} (\mu)$. Existence of the shadow is Lemma 4.6 in \cite{BeJu14}.
The coupling $\pi_{lc}$ is constructed via the following rule: 
let  $\pi_{lc}$ be the measure that transports, for each $s\in \RR$, the restriction $\mu|_{(-\infty, s]}$ to its shadow in $\nu$, i.e. to $S^{\nu}(\mu|_{(-\infty,s]})$. That this prescription leads indeed to a left-monotone measure in $\cM_{\text{mart}}(\mu, \nu)$ is Theorem 4.18 in \cite{BeJu14}. Most important for us is an easy consequence of this: letting $c_{s,t}$ denote the cost function $(x,y) \mapsto \mathbbm{1}_{(-\infty, s]}(x) |y-t|$, the measure $\pi_{lc}$ is the unique measure in $\cM_{\text{mart}}(\mu, \nu)$ that is a minimizer for all the functions $c_{s,t}$. This follows easily from the definition of the shadow and the fact that the functions $y\mapsto |y-t|$ are enough to describe the convex order of measures on $\RR$. 
\begin{lemma}
A left-monotone set $\Gamma$ is $c_{s,t}$-finitely minimal for all $s,t \in \RR$.
\end{lemma}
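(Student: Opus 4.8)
The plan is to test finite minimality directly. Fix $s,t\in\RR$, a finite measure $\alpha$ concentrated on finitely many points of $\Gamma$, and a competitor $\alpha'$; the goal is $\int c_{s,t}\,d\alpha\le\int c_{s,t}\,d\alpha'$. Since $c_{s,t}(x,y)=\mathbbm{1}_{(-\infty,s]}(x)\,|y-t|$, only the mass sitting over $\{x\le s\}$ is charged. Writing $\theta:=p_2(\alpha|_{(-\infty,s]\times\RR})$ and $\theta':=p_2(\alpha'|_{(-\infty,s]\times\RR})$ one has $\int c_{s,t}\,d\alpha=\int|y-t|\,d\theta$ and likewise for $\alpha'$, so the whole problem reduces to comparing these two sub-marginals.

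First I would record the soft properties shared by $\theta$ and $\theta'$. Let $x_1<\dots<x_n$ be the first coordinates occurring in $\alpha$, let $w_i$ be the $\alpha$-mass over $x_i$ and $b_i:=\int y\,d\alpha_{x_i}(y)$ the conditional barycenter, and set $\eta:=\sum_{i:\,x_i\le s}w_i\delta_{b_i}$. Both $\theta$ and $\theta'$ are dominated by $p_2(\alpha)=p_2(\alpha')$, and both satisfy $\eta\le_{cx}\theta$ and $\eta\le_{cx}\theta'$: this is Jensen's inequality applied to the disintegrations, and for $\theta'$ it is precisely here that the competitor property enters, since a competitor reproduces both the masses $w_i$ (equal first marginal) and the barycenters $b_i$ (equal conditional barycenters). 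In particular $\eta\le_E p_2(\alpha)$, so the shadow $S^{p_2(\alpha)}(\eta)$ exists and, being minimal in convex order among measures below $p_2(\alpha)$ that dominate $\eta$, satisfies $S^{p_2(\alpha)}(\eta)\le_{cx}\theta'$.

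The crux is the identity $\theta=S^{p_2(\alpha)}(\eta)$, which is the only place where left-monotonicity is used. I would prove it by induction on $n$, peeling off the smallest coordinate $x_1$ and invoking the associativity of the shadow from \cite{BeJu14}, namely $S^{\rho}(\eta_1+\eta_2)=S^{\rho}(\eta_1)+S^{\rho-S^{\rho}(\eta_1)}(\eta_2)$. The base step is the heart of the matter: I claim $S^{p_2(\alpha)}(w_1\delta_{b_1})=p_2(\alpha|_{x_1})$. Indeed $b_1$ lies in the open interval $(\min Y_1,\max Y_1)$ spanned by the $y$-values $Y_1$ over $x_1$, and by left-monotonicity every other first coordinate, all larger than $x_1$, sends its mass outside this open interval; hence inside it $p_2(\alpha)$ coincides with $p_2(\alpha|_{x_1})$. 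Consequently any $\rho\le p_2(\alpha)$ with the same mass $w_1$ and barycenter $b_1$ obeys $\rho|_{(\min Y_1,\max Y_1)}\le p_2(\alpha|_{x_1})$, so it places at least as much mass at the more extreme locations, whence $p_2(\alpha|_{x_1})\le_{cx}\rho$. Thus $p_2(\alpha|_{x_1})$ is the most concentrated admissible measure, i.e.\ the shadow. Removing its contribution leaves $\alpha$ restricted to $x_2,\dots,x_n$, still supported on the left-monotone set $\Gamma$ and now with target $p_2(\alpha)-p_2(\alpha|_{x_1})$, and the induction hypothesis together with associativity yields $\theta=S^{p_2(\alpha)}(\eta)$.

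Finally, combining $\theta=S^{p_2(\alpha)}(\eta)$ with $\eta\le_{cx}\theta'\le p_2(\alpha)$ and the minimality of the shadow gives $\theta\le_{cx}\theta'$. Since $y\mapsto|y-t|$ is convex and $\theta,\theta'$ have equal mass and equal barycenter, this yields $\int|y-t|\,d\theta\le\int|y-t|\,d\theta'$, that is $\int c_{s,t}\,d\alpha\le\int c_{s,t}\,d\alpha'$. As $\alpha$ and $\alpha'$ were arbitrary, $\Gamma$ is $c_{s,t}$-finitely minimal for every $s,t$. The main obstacle is the base step of the induction: converting the combinatorial left-monotonicity condition into the analytic statement that $p_2(\alpha|_{x_1})$ is exactly the shadow of the corresponding point mass; everything else is bookkeeping with convex order.
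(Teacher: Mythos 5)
Your proof is correct, and it reaches the conclusion by a genuinely different route than the one taken in the paper. The paper argues directly against the competitor: it shows by induction on $i$ that $\alpha_{x_1}+\dots+\alpha_{x_i}\leq_{cx}\alpha'_{x_1}+\dots+\alpha'_{x_i}$, with the base case being exactly your observation that left-monotonicity forces $p_2(\alpha)$ to coincide with $\alpha_{x_1}$ on the open interval spanned by the fibre over $x_1$, and the inductive step handled by an ad hoc case distinction (and only sketched for $i\geq 3$). You instead identify the restricted second marginal $\theta=p_2(\alpha|_{(-\infty,s]\times\RR})$ as the shadow $S^{p_2(\alpha)}(\eta)$ of the barycenter measure $\eta$, and then dispose of \emph{every} competitor in one stroke via the minimality of the shadow, since the competitor property gives precisely $\eta\leq_{cx}\theta'\leq p_2(\alpha)$. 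The key use of left-monotonicity is identical in both arguments; what differs is the bookkeeping. Your version makes the general inductive step rigorous (where the paper waves its hands) and exposes the conceptual link to the left-curtain construction, at the price of importing the nontrivial associativity theorem for shadows (Theorem 4.8 in \cite{BeJu14}); the paper's version is self-contained and needs no shadow machinery. Two small points to tighten: in the base step, the claim that any admissible $\rho$ with $\rho|_{(\min Y_1,\max Y_1)}\leq p_2(\alpha|_{x_1})$ satisfies $p_2(\alpha|_{x_1})\leq_{cx}\rho$ deserves the standard justification (the difference measures $\tau=p_2(\alpha|_{x_1})-\rho\wedge p_2(\alpha|_{x_1})$ and $\tau'=\rho-\rho\wedge p_2(\alpha|_{x_1})$ have equal mass and mean and are supported inside, respectively outside, the interval $[\min Y_1,\max Y_1]$, so comparison with the affine chord of any convex test function gives $\tau\leq_{cx}\tau'$); and you should note that the degenerate case where the fibre over $x_1$ is a single atom is trivial, since then $p_2(\alpha|_{x_1})=w_1\delta_{b_1}$ is already its own shadow.
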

\begin{proof}
Let $\alpha$ be a finite measure on finitely many points in $\Gamma$, let $\alpha'$ be a competitor of $\alpha$ and let $x_1 < \dots < x_n$ be the points on which $p_1(\alpha) = p_1(\alpha')$ is concentrated. If $s< x_1$ or $s\geq x_n$, then $\int c_{s,t} \, d\alpha = \int c_{s,t} \, d\alpha'$. In slight misuse of notation, writing $\alpha_{x_i}$ for the one-dimensional measure $\alpha |_{\{x_i\} \times \RR}$ (i.e. the non-normalized disintegration of $\alpha$), we claim that 
\beo
\alpha_{x_1} + \dots + \alpha_{x_i} \leq_{cx} \alpha'_{x_1} + \dots + \alpha'_{x_i}
\eeo
holds for all $i= 1,  \dots, n$. This in particular implies  that $\int c_{s,t} \,  d\alpha \leq \int c_{s,t} \, d \alpha' $ also holds for all $s \in [x_1, x_n)$:\\
Let $i=1$: set $y_1^- = \min \{ y: \alpha (x_1, y) > 0 \}$ and $y_1^+ = \max \{ y: \alpha (x_1, y) > 0 \}$. 
By left-monotonicity, for all $y \in (y_1^-, y_1^+)$ we  must have 
$$
\alpha (x_1, y) = p_1(\alpha) (y) \geq \alpha' (x_1,y).
$$
But $\int y \, d \alpha_{x_1}(y) = \int y \, d\alpha'_{x_1}(y)$ and $\sum_y \alpha_{x_1}(y) = \sum_y \alpha'_{x_1}(y) $, hence 
$$
\alpha_{x_1} \leq_{cx} \alpha'_{x_1}.
$$
Let $i=2$: 
If $y_2^- = \min \{ y: \alpha(x_2, y) > 0  \} \geq y_1^+$ or if
$y_2^+ = \max \{ y: \alpha (x_2, y) > 0 \} \leq y_1^-$, then one can repeat the above argument for $\alpha_{x_2}, \alpha'_{x_2}$ and the claim follows by addition. Elsewise, 
one can repeat the argument for $\alpha_{x_1}+\alpha_{x_2}, \alpha'_{x_1}+\alpha'_{x_2}$ in place of $\alpha_{x_1}, \alpha'_{x_1}$. \\
For larger $i$ only the notation is more complicated.
\end{proof}

\begin{proof}[Proof of Theorem 1.5]
Let $\pi \in \cM_{\text{mart}}(\mu, \nu)$ be left-monotone and $\Gamma$ be a left-monotone set with $\pi (\Gamma) = 1$. By the above lemma, $\Gamma$ is $c_{s,t}$-finitely minimal for all $s,t \in \RR$. By Theorem 1.3 it is hence a minimizer for all $c_{s,t}$, and it must therefore be equal to $\pi_{lc}$. 
\end{proof}

\bibliographystyle{alpha}
\bibliography{joint_biblio}

\end{document}